\newtheorem{lemma}{Lemma}
\def \T{\textup{T}}
\title{The smallest pair of cospectral cubic graphs with different chromatic indexes}
\author{\small Zhidan Yan\quad\quad Wei Wang\thanks{Corresponding author. Email: wangwei.math@gmail.com}
\\
{\footnotesize School of Mathematics, Physics and Finance, Anhui Polytechnic University, Wuhu 241000, P. R. China}
}
\date{}
\begin{document}
 \maketitle
 \begin{abstract}
Using an exhaustive search on cubic graphs of order 16, we find a unique cospectral pair with different chromatic indexes. This example indicates that the chromatic index of a regular graph is not characterized by its spectrum, which answers a question recently posed in [O.~Etesami, W.~H.~Haemers, On NP-hard graph properties characterized by the spectrum, Discrete Appl. Math., 285(2020)526-529]. We prove that any orthogonal matrix representing the similarity between the two adjacency matrices of the cospectral pair  cannot be rational. This implies that the cospectral pair cannot be obtained using the original GM-switching method or its generalizations based on rational orthogonal matrices.\\

\noindent\textbf{Keywords}: cubic graph; cospectral graphs; chromatic index; GM-switching;  rational orthogonal matrix

\noindent
\textbf{AMS Classification}: 05C50
\end{abstract}
\section{Introduction}
\label{intro}
A graph property $\mathcal{P}$ is \emph{determined  by its adjacency spectrum} (DS) if there exist no cospectral graphs such that one has the property $\mathcal{P}$ and the other one not. For a given property $\mathcal{P}$, we shall refer to  $\mathcal{P}$ as a DS-property if $\mathcal{P}$ is DS, otherwise we refer to  $\mathcal{P}$ as a non-DS-property.  Being bipartite and being regular are two DS-properties, while being connected is a non-DS-property. It is an interesting problem to differentiate DS-properties and non-DS-properties and this problem becomes more interesting if we restrict to regular graphs~\cite{haemers2020DMGT}. Recently, many graph properties are proved to be non-DS by constructing the  desired cospectral pairs of graphs which are usually regular.  For example, Bl\'{a}zsik, Cummings and Haemers~\cite{blazsik2015DM} constructed a pair of cospectral $k$-regular graphs, where one has a perfect matching and the other one not. Other non-DS-properties include being  distance-regular~\cite{haemers1996LAA}, having a given diameter~\cite{haemers1995LMA}, having a given vertex-connectivity or edge-connectivity~\cite{haemers2020DMGT},  having a given zero forcing number \cite{abiad2021arxiv}, and admitting a Hamiltonian cycle~\cite{filar2005AMSG,etesami2020DAM,liu2020LAA}.

In searching of NP-hard properties determined by spectrum, Etesami and Haemers~\cite{etesami2020DAM}  asked whether there exist two cospectral regular graphs with different chromatic indexes. By Vizing theorem, the chromatic index of a $k$-regular graph is either $k$ or $k+1$. By a result of Holyer~\cite{holyer1981SJC}, it is NP-complete to determine the chromatic index for an arbitrary graph and even for a cubic graph. In~\cite{etesami2020DAM}, the authors tended to believe that there exists a pair of cospectral $k$-regular graphs for which one is $k$-edge-colorable and the other is  $(k+1)$-edge-colorable (but not $k$-edge-colorable).  However, results from~\cite{cioabatoappearAMC} suggest that the desired cospectral pair may be rare and hard to find.

In 2005,  Filar, Gupta and Lucas~\cite{filar2005AMSG} began to consider the relationship between cospectrality and Hamiltonicity. They reported a pair of  cubic graphs with 20 vertices, where one is Hamiltonian and the other is not;  see Figure \ref{ex20}.  This means that Hamiltonicity is a non-DS-property, even restricted to regular graphs. It is clear that any cubic graph with a Hamiltonian cycle is $3$-edge-colorable, but the converse is known to be not true in general.  Thus, we know that the left graph in Figure 1 is $3$-edge-colorable but it is not clear whether the right graph is $3$-edge-colorable or not. Using Mathematica on a personal computer, it takes nearly 30 seconds to obtain the chromatic polynomial of the line graph for the right one.  We do not write down the huge polynomial exactly but only stress that it factors as
 $(x-3) (x-2)^2 (x-1) x \left(x^{25}-52 x^{24}+\cdots-8285534208\right)$. Note that the value of the polynomial is zero at $x=3$, that is, there is no proper $3$-vertex-coloring for the line graph. In other words,  the right graph is not $3$-edge-colorable. Therefore, this pair of graphs verifies the  aforementioned guess of Etesami and Haemers~\cite{etesami2020DAM}.
  \begin{figure}[htbp]
 	\centering
 	\includegraphics[height=4.6cm]{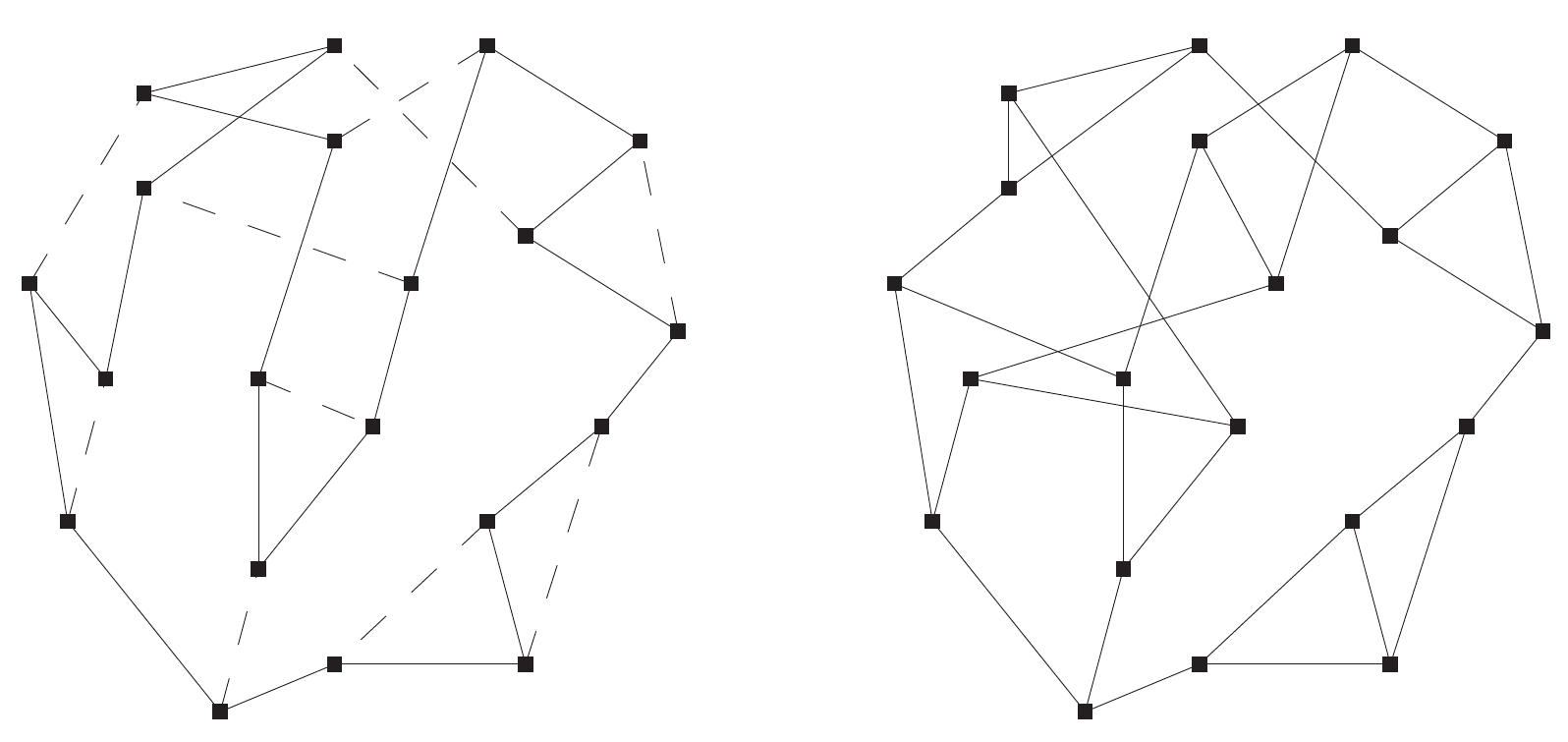}
 	\caption{A pair of cospectral cubic graphs with and without a Hamiltonian cycle~\cite{filar2005AMSG}}
 	\label{ex20}
 \end{figure}

In this note, we give a pair of smallest cospectral cubic  graphs with different chromatic indexes. We find this pair through an exhaustive search on cubic graphs with at most 16 vertices. All these cubic graphs are generated using the \emph{nauty} package \cite{mckay2014JSC} in SageMath~\cite{sage}. There are no cospectral cubic graphs with at most 12 vertices. For cubic graphs of order 14, there are exactly 3 pairs of cospectral graphs. All these 6 graphs are $3$-edge-colorable.  For cubic graphs of order 16, there are exactly 41 cospectral pairs together with one cospectral triple. The three graphs in the unique cospectral triple are all $3$-edge-colorable. Among the 41 cospectral pairs,  only one pair contains two graphs with different chromatic indexes. We shall describe the unique cospectral pair in Section \ref{ex}. Interestingly enough, one graph is closely related to the Petersen graph. This observation leads to a very simple proof for the nonexistence of a proper $3$-edge-coloring.

We prove that the cospectral pair found in this note cannot be constructed from the well-known GM-switching method, which was originally introduced in~\cite{godsil1982AM}. Indeed we prove an even stronger result which says that any orthogonal matrix connecting the two adjacency matrices cannot be rational.

Lastly, we mention that the Mathematica codes for all computations in this note can be found at https://github.com/wangweiAHPU/cospectral\_cubic\_graphs.

\section{The smallest example}\label{ex}
 The two graphs shown in Figure \ref{ex16} are cospectral with characteristic polynomial
$$(x-3) x (x+2) \left(x^2-2\right) \left(x^2-x-3\right) \left(x^3-4 x-2\right) \left(x^3-4 x+1\right) \left(x^3+2 x^2-2 x-2\right).$$
 \begin{figure}[htbp]
	\centering
	\includegraphics[height=4.6cm]{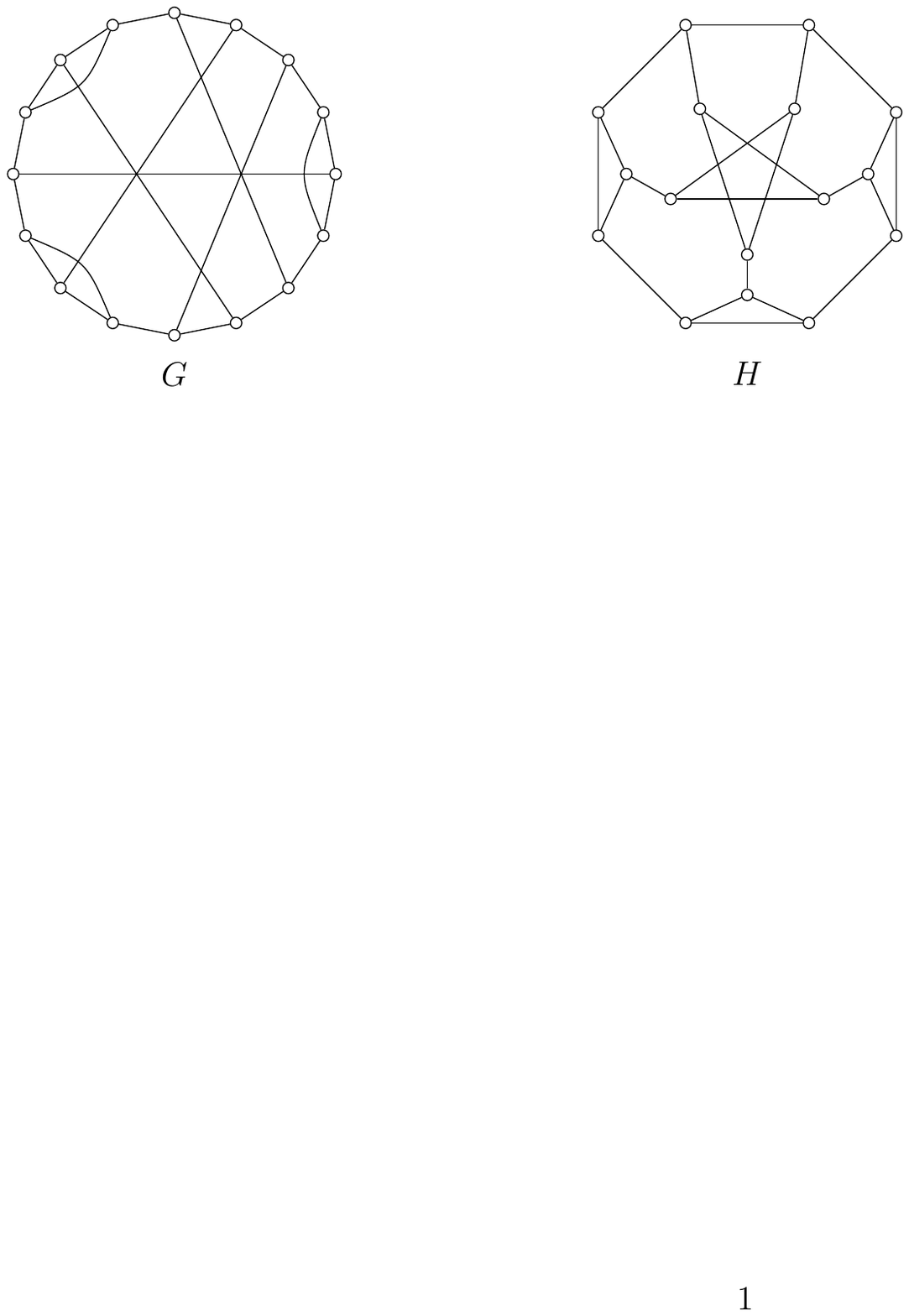}
	\caption{A pair of cospectral cubic graphs with different chromatic indexes}
	\label{ex16}
\end{figure}
 By the drawing of $G$, we easily see that $G$ is $3$-edge-colorable as $G$ contains a Hamiltonian cycle. We claim that the right graph $H$ is not $3$-edge-colorable. From the drawing of $H$, one  easily sees that $H$ can be obtained from the Petersen graph  by replacing each vertex  in a path $(u,v,w)$ with a triangle; see Figure \ref{petersen}.  Moreover, it is not difficult to see that there is a natural 1-1 correspondence  between the collection of all proper $3$-edge-colorings of $H$ and that of the Petersen graph. But it is well-known that the Petersen graph is not $3$-edge-colorable (see \cite{naserasr2003,volkmann} for example). Thus,  $H$ is not $3$-edge-colorable.

  \begin{figure}[htbp]
 	\centering
 	\includegraphics[height=4.2cm]{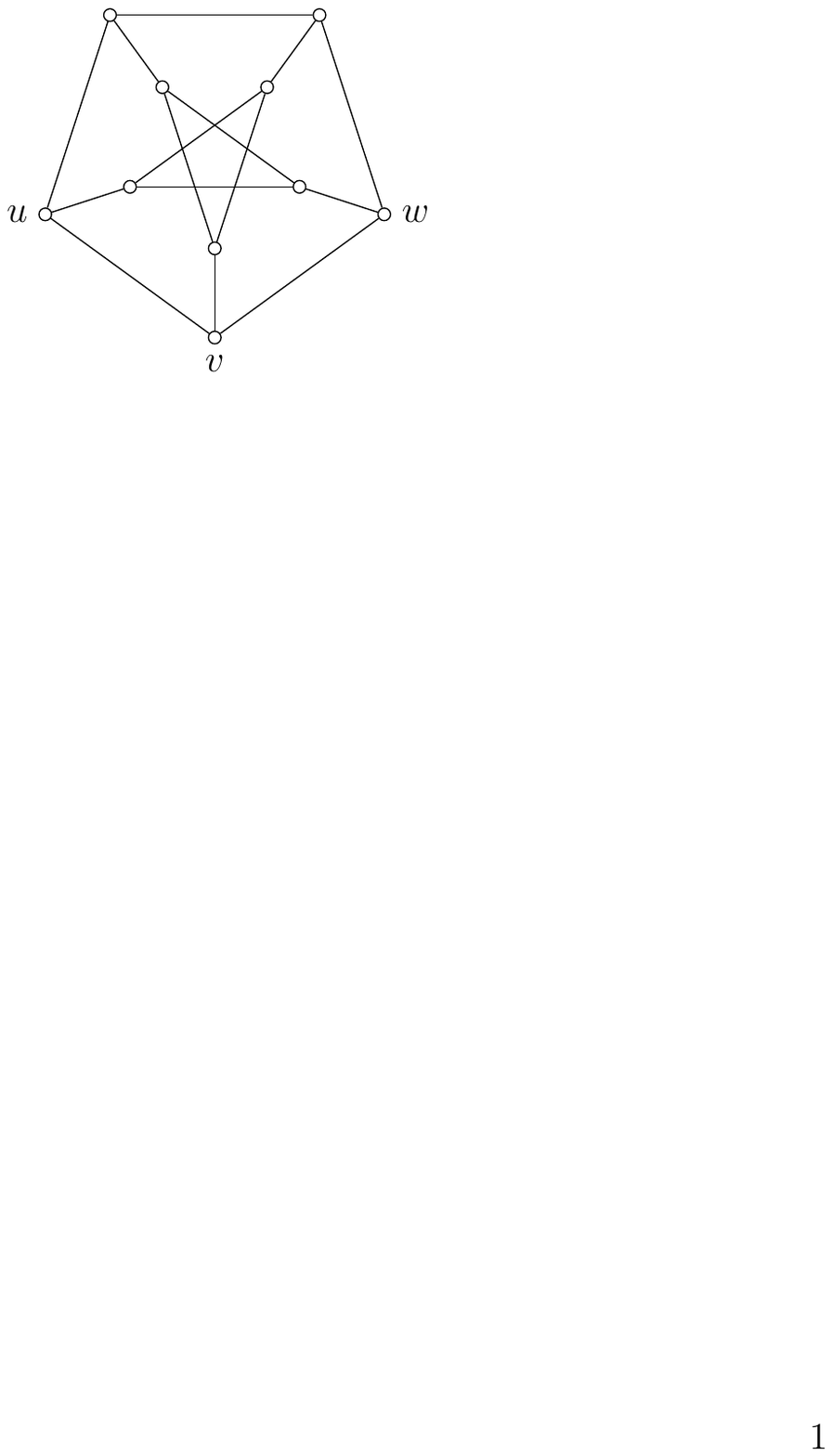}
 	\caption{The Petersen graph}
 	\label{petersen}
 \end{figure}

 We note that $H$ is not Hamiltonian as $H$ is not $3$-edge-colorable. Thus this pair of cospectral graphs also explains that the Hamiltonicity of a graph can  not be determined by the spectrum.

Let $Q$ be any orthogonal matrix such that $A(G)=Q^\T A(H)Q$, where $G$ and $H$ are the cospectral pair as shown in Figure \ref{ex16}, and $A(\cdot)$ denotes the adjacency matrix of a graph.  We shall prove that $Q$ cannot be a rational matrix. We need a simple lemma, which is a slight variant of \cite[Theorem 7]{liu2017EJC}. In
\cite{liu2017EJC}, Liu and Wang gave a family of cospectral but nonregular graphs such that no rational orthogonal matrix exists for the similarity between the adjacency matrices.

For a vector $\xi\in \mathbb{R}^n$, we use $||\xi||$ to denote the Euclidean norm of $\xi$.
  \begin{lemma}\label{intirration}
	Let $G_1$ and $G_2$ be a pair of cospectral graphs with a common simple integral eigenvalue $\lambda$. Let  $\xi$ (resp $\eta$) be an integral eigenvector of $A(G_1)$ (resp $A(G_2)$) corresponding to $\lambda$. If  $||\eta||/|| \xi||$ is irrational, then there exists no rational orthogonal matrix $Q$ such
	that $A(G_1) = Q^\T A(G_2)Q$.
\end{lemma}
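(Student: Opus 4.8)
The plan is to argue by contradiction and exploit the two defining features of a rational orthogonal similarity: it preserves rationality and it preserves Euclidean length. Suppose, then, that a rational orthogonal matrix $Q$ exists with $A(G_1)=Q^\T A(G_2)Q$. I would first rewrite this as the intertwining relation $A(G_2)Q = Q A(G_1)$ and apply both sides to the eigenvector $\xi$. Since $A(G_1)\xi = \lambda\xi$, this gives $A(G_2)(Q\xi) = Q A(G_1)\xi = \lambda (Q\xi)$, so $Q\xi$ lies in the $\lambda$-eigenspace of $A(G_2)$.

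The next step uses the hypothesis that $\lambda$ is a \emph{simple} eigenvalue. Then the $\lambda$-eigenspace of $A(G_2)$ is one-dimensional and spanned by $\eta$, so $Q\xi = c\,\eta$ for some scalar $c$. Because $Q$ is invertible and $\xi\neq 0$, we have $Q\xi\neq 0$, and hence $c\neq 0$; this nonvanishing is what later lets me invert $c$.

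From this single relation $Q\xi=c\,\eta$ I would extract both a rationality statement and a norm identity. On the rationality side: $Q$ is rational and $\xi,\eta$ are integral, so $Q\xi$ is a rational vector, and comparing a coordinate $i$ with $\eta_i\neq 0$ shows $c=(Q\xi)_i/\eta_i$ is rational. On the length side: orthogonality gives $||Q\xi|| = ||\xi||$, while $||Q\xi|| = |c|\,||\eta||$, so combining these yields $||\eta||/||\xi|| = 1/|c|$. Since $c$ is a nonzero rational, $1/|c|$ is rational, contradicting the assumed irrationality of $||\eta||/||\xi||$. This completes the argument and forces the conclusion that no such rational orthogonal $Q$ exists.

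There is no serious technical obstacle here; the proof is short and essentially forced once $Q$ acts on the simple eigenspace. The only points requiring a little care are verifying that $Q\xi$ genuinely lands in the $\lambda$-eigenspace of $A(G_2)$ (so that simplicity can be invoked to conclude proportionality to $\eta$), and confirming that $c$ is both rational and nonzero before passing to $1/|c|$.
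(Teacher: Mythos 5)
Your proposal is correct and follows essentially the same route as the paper's proof: both derive the intertwining relation $A(G_2)Q = QA(G_1)$, use simplicity of $\lambda$ to get proportionality between $Q\xi$ and $\eta$, and then combine rationality of the proportionality constant with norm preservation to contradict the irrationality of $||\eta||/||\xi||$. The only cosmetic difference is that you write $Q\xi = c\,\eta$ where the paper writes $\eta = rQ\xi$, and you are slightly more explicit about why the constant is nonzero and rational.
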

\begin{proof}
	Suppose to the contrary that there exists a rational orthogonal $Q$ such that  $A(G_1) = Q^\T A(G_2)Q$. Then $QA(G_1)=A(G_2)Q$ and hence $A(G_2)Q\xi=QA(G_1)\xi=\lambda Q\xi$. This means that $Q\xi$ is an eigenvector of $A(G_2)$ corresponding to $\lambda$.  As $\lambda$ is a simple eigenvalue, we must have $\eta=rQ\xi$ for some $r\in\mathbb{R}$. Since $\xi,\eta,Q$ are all rational, we see that $r$ is rational. As $Q$ is orthogonal, we have $||\eta||=|r|\cdot||Q\xi||=|r|\cdot||\xi||$ and hence $||\eta||/|| \xi||=|r|$, which is rational. This contradicts the assumption of this lemma and hence completes the proof.
\end{proof}
For the two graphs $G$ and $H$ shown in Figure \ref{ex16},
we prove that there is no rational  orthogonal matrix $Q$ such that $A(G)=Q^\T A(H)Q$. It can be easily verified that $0$ is a common simple eigenvalue of $A(G)$ and $A(H)$. Moreover  $\xi=(0, -1, 0, 1, -1, 0, 0, 0, 1, 1, -1, 0, 0,$ $-1, 0, 1)^\T$
and $\eta=(2, 1, -1,$ $ -1, 1, 0, 0, -2, -1, 0, 2, -1, 0, 1, -2, 1)^\T$ are two integral eigenvectors of $A(G)$ and $A(H)$ corresponding to the zero eigenvalue.
Clearly  $||\eta||/|| \xi||$ equals $\sqrt{3}$, which is irrational. By Lemma \ref{intirration},  there is no rational  orthogonal matrix $Q$ such that $A(G)=Q^\T A(H)Q$.

It is known that the GM-switching method corresponds to a special kind of rational orthogonal matrices. This method has been generalized recently by choosing different rational orthogonal matrices; see \cite{abiad2012ELJC,wang2019LAA,qiu2020LAA}. But since the orthogonal matrices in all these methods are rational,  the cospectral pair $G$ and $H$ shown in Figure \ref{ex16} can not be generated in this way. We remark that the same conclusion also holds for the pair in Figure 1 by considering the common simple eigenvalue $\lambda=0$.
\section*{Declaration of competing interest}
There is no conflict of interest.
\section*{Acknowledgments}
The authors are grateful to the referees for their careful reading and valuable comments. This work is supported by the	National Natural Science Foundation of China (Grant Nos. 12001006, 11971406) and the Scientific Research Foundation of Anhui Polytechnic University (Grant No.\,2019YQQ024).


\begin{thebibliography}{99}
\bibitem{abiad2012ELJC}	A.~Abiad, W.~H.~Haemers, Cospectral graphs and regular orthogonal matrices of	level 2, \emph{Electronic J. Combin}., 19(2012), \#P13.

\bibitem{abiad2021arxiv} A.~Abiad, B.~Brimkov, J.~Breen, T.~R.~Cameron, H.~Gupta, R.~R.~Villagr\'an, Constructions of cospectral graphs with different zero forcing numbers, arXiv:2111.12343.
	
	
	
 \bibitem{blazsik2015DM}Z.~L.~Bl\'{a}zsik, J. Cummings, W.~H.~Haemers, Cospectral regular graphs with and without a perfect matching, \emph{Discrete Math}., 338(2015)199-201.	
		
\bibitem{cioabatoappearAMC}	S.~M.~Cioab\u{a}, K.~Guo, W.~H.~Haemers, The chromatic index of strongly regular graphs, \emph{Ars Math. Contemp}., 20(2021)187-194
	
\bibitem{etesami2020DAM}O.~Etesami, W.~H.~Haemers, 	On NP-hard graph properties characterized by the spectrum, \emph{Discrete Appl. Math}., 285(2020)526-529.


\bibitem{filar2005AMSG} J.~A.~Filar, A.~Gupta, S.~K.~Lucas,  Connected cospectral graphs are not necessarily both Hamiltonian,  \emph{Aust.
 Math. Soc. Gaz}., 32(2005)193.


\bibitem{godsil1982AM} C.~D.~Godsil, B.~D.~McKay, Constructing cospectral graphs, \emph{Aequationes Math}., 25(1982)257-268.

\bibitem{holyer1981SJC} I.~Holyer, The NP-completeness of edge-coloring, \emph{
	SIAM J. Comput}., 10 (1981)718-720.

\bibitem{haemers1996LAA} W.~H.~Haemers, Distance-regularity and the spectrum of graphs, \emph{Linear Algebra
	Appl}., 236 (1996)265-278.

\bibitem{haemers1995LMA} W.~H.~Haemers, E.~Spence, Graphs cospectral with distance-regular graphs, \emph{Linear
	Multilinear Algebra}, 39(1995)91-107.


\bibitem{haemers2020DMGT}W.~H.~Haemers, Cospectral pairs of regular graphs with different connectivity, \emph{Discuss. Math. Graph Theory}, 40(2020)577-584.
 	
 \bibitem{liu2017EJC} F.~Liu, W.~Wang, A note on non-$\mathbb{R}$-cospectral graphs, \emph{Electron. J. Combin.}, 24(2017) \#48.

 \bibitem{liu2020LAA}F.~Liu, W.~Wang, T.~Yu, H.-J.~Lai, Generalized cospectral graphs with and without Hamiltonian cycles, \emph{Linear Algebra Appl}., 585(2020)199-208.



\bibitem{mckay2014JSC} B. D. Mckay, A. Piperno, Practical graph isomorphism, \uppercase\expandafter{\romannumeral2},  \emph{J. Symb. Comput}., 60(2014)94-112.

\bibitem{naserasr2003} R.~Naserasr, R.~\v{S}krekovski, The Petersen graph is not 3-edge-colorable-a new proof, \emph{Discrete Math}.,  268 (2003)325-326.


\bibitem{qiu2020LAA} L.~Qiu, Y.~Ji, W.~Wang,
On a theorem of Godsil and McKay concerning the construction of cospectral graphs, \emph{Linear Algebra  Appl}., 603(2020)265-274.
\bibitem{sage} SageMath, the Sage Mathematics Software System (Version 8.9), The Sage Developers, 2019, https://www.sagemath.org.

\bibitem{volkmann} L.~Volkmann, The Petersen graph is not 1-factorable: postscript to `The Petersen graph is not
3-edge-colorable-a new proof' [Discrete Math. 268 (2003), 325-326], \emph{Discrete Math}., 287
(2004)193-194.

\bibitem{wang2019LAA} W.~Wang, L.~Qiu, Y.~Hu, Cospectral graphs, GM-switching and regular rational orthogonal matrices of level $p$, \emph{Linear Algebra Appl}., 563(2019)154-177.



	\end{thebibliography}
\end{document}